\newcommand{\deff}{\mbox{$\stackrel{\rm def}{=}$}}
\newcommand{\field}[1]{\mathbb{#1}}
\newcommand{\Z}{\field{Z}}
\newcommand{\F}{\field{F}}
\newcommand{\dS}{\field{S}}
\newcommand{\cP}{{\cal P}}
\newcommand{\cG}{{\cal G}}
\newtheorem{theorem}{Theorem}
\newtheorem{lemma}{Lemma}
\newtheorem{cor}{Corollary}
\begin{document}

\bibliographystyle{plain}

\title{
\begin{center}
Automorphisms of Codes in the Grassmann Scheme
\end{center}
}
\author{
{\sc Tuvi Etzion}\thanks{Department of Computer Science, Technion,
Haifa 32000, Israel, e-mail: {\tt etzion@cs.technion.ac.il}.} \and
{\sc Alexander Vardy}\thanks{Department of Electrical Engineering,
University of California San Diego, La Jolla, CA 92093, USA,
e-mail: {\tt avardy@ucsd.edu}.}}

\maketitle

\begin{abstract}
Two mappings in a finite field, the Frobenius mapping
and the cyclic shift mapping, are applied
on lines in PG($n,p$) or codes in the Grassmannian,
to form automorphisms groups in the Grassmanian and in its codes.
These automorphisms are examined on two classical
coding problems in the Grassmannian. The first is
the existence of a parallelism with lines in the related
projective geometry and the second
is the existence of a Steiner structure.
A computer search was applied to find
parallelisms and codes. A new parallelism of lines in PG(5,3) was
formed. A parallelism with these parameters was not known before.
A large code which is only slightly short of a Steiner structure was
formed.
\end{abstract}

\vspace{0.5cm}

\noindent {\bf Keywords:} Cyclic shifts, cyclotomic cosets,
Frobenius mapping, Grassmannian scheme,
parallelism, $q$-analog, spreads, Steiner structures.

%\footnotetext[1] { This research was supported in part by the
%United States --- Israel Binational Science~Foundation (BSF),
%Jerusalem, Israel, under Grant 2006097.  }

%%%%%%%%%%%%%%%%%%%%%%%%%%%%%%%%%%%%%%%%%%%%%%%%%%%%%%%%%%%%%%%%%%%%%%
%%%%%%%%%%%%%%%%%%%%%%%%%%%%%%%%%%%%%%%%%%%%%%%%%%%%%%%%%%%%%%%%%%%%%%
%%%%%%%%%%%%%%%%%%%%%%%%%%%%%%%%%%%%%%%%%%%%%%%%%%%%%%%%%%%%%%%%%%%%%%
\newpage
\section{Introduction}
\label{sec:introduction}

The Grassmannian $\cG_q(n,k)$ is the set of all
$k$-dimensional subspaces of an $n$-dimensional
subspace over the finite field $\F_q$. A code
in $\cG_q(n,k)$ is a subset
of $\cG_q(n,k)$. There has been lot of interest in
these codes in the last five year due to their
application in network coding~\cite{KoKs}.
Our motivation for this work also came from this application in
network coding.

Some of the coding problems in the Grassmannian
were formulated in the past in terms of projective geometry
or $q$-analog of block design. In this paper we will
consider two of these problems. The first is
the existence of a parallelism with lines in PG($n,p$)
and the second is the existence of a Steiner structure.

Steiner structures are known also as $q$-analog of Steiner systems.
A \emph{Steiner structure} $\dS_q[t,k,n]$ is a set $\dS$
of $k$-dimensional subspaces of~$\F_q^n$ such that
each $t$-dimensional subspace of $\F_q^n$ is contained in exactly one
subspace of $\dS$. Steiner structures were considered in many
papers~\cite{AAK,EtVa11,ScEt02,Tho1,Tho2}, where they have other names as well.
An $\dS_q[t,k,n]$ can be readily constructed for $t=k$
and for $k=n$. If $t=1$ these structures are called
$k$-\emph{spreads} and they are known to exist if and only
if $k$ divides~$n$. These structures are also considered to be
trivial. The first nontrivial case is a Steiner structure
$\dS_2[2,3,7]$. The possible existence of this structure was considered
by several authors, and some conjectured~\cite{Met99} that it doesn't exist
and that generally nontrivial
Steiner structures do not exist.

$k$-spreads were considered in numerous papers. These
can be viewed as partition of the points set of PG($n-1,q$) into
disjoint $(k-1)$-dimensional subspaces (in the geometry). It is
called a $(k-1)$-spread in the geometry. Two disjoint spreads
are called \emph{parallel spreads} and a partition of the
$\frac{q^n-1}{q-1}$ points of PG($n-1,q$) into disjoint
$(k-1)$-spreads is called a \emph{parallelism}. The only known parallelism
with $(k-1)$-spreads, $k>2$, is for $k=3$, $q=2$,
and $n=6$~\cite{Sar02}.
It is known that parallelisms with 1-spreads exist for $q=2$ and
all even $n$~\cite{Bak76,ZZS71}, and for each prime power~$q$,
where $n=2^m$~\cite{Beu74}.
No other parallelisms are known.

In this paper we consider solutions for these two problems,
the existence of parallelisms
with 1-spreads in PG($n-1,p$) (which are 2-spreads in
$\cG_p(n,2)$) and the existence of nontrivial Steiner structures.
We will use two types of mappings, the Frobenius mappings
and cyclic shift mappings to form an automorphisms group
in codes and as a mechanism
to obtain disjoint spreads and Steiner structures.
We examine 1-spreads in which the Frobenius mappings form
an automorphism group and a parallelism is obtained by using
cyclic shifts on two disjoint cycles of PG($n-1,p$).
The method was found to be successful for $p=3$ and $n=6$.
We conjecture that the method would be successful whenever
some necessary conditions are satisfied.
To form a Steiner structure
$\dS_2 [2,3,13]$ fifteen representatives are needed.
It turns out that fourteen such representatives are easy to obtain
and it is an open problem whether fifteen representatives
exist.

The rest of this paper is organized as follows.
In Section~\ref{sec:map} we define the two types of mappings
and state some of their properties.
In Section~\ref{sec:parallel} we use the two types of mappings for the
construction of a parallelism of lines in PG($n,p$).
In Section~\ref{sec:Steiner}
we use the two types of mappings for an attempt to construct
nontrivial Steiner structures.
Conclusion and problems for further research are given
in Section~\ref{sec:conclusion}.

\section{Mappings in the Grassmannian}
\label{sec:map}

Let $\F_{p^n}$ be a finite field with $p^n$ elements, where $p$ and $n$ are primes,
and let $\alpha$ be a primitive element in $\F_{p^n}$.

The \emph{Frobenius mapping} $\Upsilon_\ell$,
$0 \leq \ell \leq n-1$, $\Upsilon_\ell : \F_{p^n} \setminus \{ 0 \}
\longrightarrow \F_{p^n} \setminus \{ 0 \}$
is defined by $\Upsilon_\ell (x) \deff x^{p^\ell}$ for each
$x \in \F_{p^n} \setminus \{ 0 \}$.

The \emph{cyclic shift mapping} $\Phi_j$, $0 \leq j \leq p^n-2$,
$\Phi_j : \F_{p^n} \setminus \{ 0 \}
\longrightarrow \F_{p^n} \setminus \{ 0 \}$ is defined by
$\Phi_j (\alpha^i) \deff \alpha^{i+j}$, for each $0 \leq i \leq p^n-2$.

The two types of mappings $\Upsilon_\ell$ and $\Phi_j$ can be applied
on a subset or a subspace, by applying the mapping on each
element of the subset or subspace, respectively. Formally,
given two integers $0 \leq \ell \leq n-1$ and $0 \leq j \leq p^n-2$,
$$
\Upsilon_\ell \{ x_1 , x_2 , \ldots , x_r \} \deff
\{ \Upsilon_\ell (x_1), \Upsilon_\ell (x_2),\ldots,\Upsilon_\ell(x_r) \}~,
$$
$$
\Phi_j \{ x_1 , x_2 , \ldots , x_r \} \deff \{ \Phi_j (x_1), \Phi_j
(x_2),\ldots,\Phi_j (x_r) \}.
$$

\begin{lemma}
\label{lem:inverse}
The mappings $\Upsilon_\ell$ and $\Phi_j$ are invertible.
\end{lemma}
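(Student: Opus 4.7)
The plan is to exhibit explicit inverses for both mappings, since on the finite set $\F_{p^n}\setminus\{0\}$ producing a two-sided inverse (or, equivalently, showing injectivity) is enough.

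For the cyclic shift $\Phi_j$, I would simply observe that every nonzero element of $\F_{p^n}$ has a unique representation as $\alpha^i$ with $0\le i\le p^n-2$, because $\alpha$ is primitive and hence has multiplicative order $p^n-1$. Set $j' \deff (p^n-1-j) \bmod (p^n-1)$. Then for each $0\le i\le p^n-2$,
\[
\Phi_{j'}(\Phi_j(\alpha^i)) = \Phi_{j'}(\alpha^{i+j}) = \alpha^{i+j+j'} = \alpha^i,
\]
where the exponent is read modulo $p^n-1$. Thus $\Phi_{j'}$ is a two-sided inverse of $\Phi_j$.

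For the Frobenius mapping $\Upsilon_\ell$, the natural candidate inverse is $\Upsilon_{n-\ell}$. The key ingredient is that every $x\in\F_{p^n}\setminus\{0\}$ satisfies $x^{p^n-1}=1$, hence $x^{p^n}=x$. With this,
\[
\Upsilon_{n-\ell}(\Upsilon_\ell(x)) = (x^{p^\ell})^{p^{n-\ell}} = x^{p^n} = x,
\]
and symmetrically $\Upsilon_\ell(\Upsilon_{n-\ell}(x)) = x$, so $\Upsilon_{n-\ell}$ inverts $\Upsilon_\ell$. (One has to be mildly careful when $\ell=0$: in that case $\Upsilon_0$ is the identity and is clearly its own inverse; the formula above still works if we interpret $\Upsilon_{n-0}=\Upsilon_n$ as the identity, which it is by $x^{p^n}=x$.)

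There is no real obstacle here; the only subtle point is that the index $\ell$ of $\Upsilon_\ell$ is restricted to the range $0\le\ell\le n-1$, so the inverse must be recorded as $\Upsilon_{n-\ell\bmod n}$ to stay in that range, analogously to the cyclic shift case. Once this bookkeeping is in place, both statements are immediate from basic finite-field identities.
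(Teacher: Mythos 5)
Your proposal is correct and follows essentially the same route as the paper, which simply records the inverses $\Upsilon_\ell^{-1}=\Upsilon_{n-\ell}$ and $\Phi_j^{-1}=\Phi_{p^n-1-j}$ (the paper writes $\Phi_{2^n-1-j}$, an apparent typo for general $p$); you merely supply the one-line verifications via $x^{p^n}=x$ and the order of $\alpha$. Your remarks on the $\ell=0$ case and on reducing the indices modulo $n$ and $p^n-1$ are sensible bookkeeping that the paper leaves implicit.
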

\begin{proof}
Clearly, $\Upsilon_\ell^{-1} = \Upsilon_{n-\ell}$ and
$\Phi_j^{-1} = \Phi_{2^n-1-j}$.
\end{proof}

For a given integer $s \in \Z_{p^n-1}$,
the cyclotomic coset $C_s$ is defined by
$$
C_s \deff \{ s \cdot p^i ~:~ 0 \leq i \leq n-1 \}~.
$$
The smallest element in a cyclotomic coset is called
the \emph{coset representative}. Let $C(s)$ denote the
coset representative of $C_s$, i.e. if $r$ is the coset
representative for the coset of $s$, $C_s$,
then $r=C(s)$. The first lemma is well known and
easy to verify.

\begin{lemma}
The size of a cyclotomic coset is either $n$ or \emph{one}. There are
exactly $\frac{p^n-p}{n}$ different cyclotomic cosets of size $n$.
\end{lemma}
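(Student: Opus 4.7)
The plan is to view the set $\Z_{p^n-1}$ as acted on by multiplication-by-$p$, so that a cyclotomic coset $C_s$ is an orbit of this action on the element $s$. First I would observe that since $p^n \equiv 1 \pmod{p^n-1}$, we have $s \cdot p^n \equiv s \pmod{p^n-1}$, so the length $|C_s|$ of the orbit divides $n$. Because $n$ is prime, this immediately forces $|C_s| \in \{1,n\}$, proving the first statement.

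For the count, I would next determine exactly which $s$ give singleton cosets. The condition $|C_s|=1$ is $s\cdot p \equiv s \pmod{p^n-1}$, i.e.\ $s(p-1)\equiv 0 \pmod{p^n-1}$. Since $p-1$ divides $p^n-1$ (from the factorization $p^n-1=(p-1)(p^{n-1}+\cdots+1)$), this linear congruence has exactly $p-1$ solutions in $\Z_{p^n-1}$, namely the multiples of $(p^n-1)/(p-1)$. Hence there are precisely $p-1$ cosets of size one, which correspond under $s\mapsto \alpha^s$ to the nonzero elements of the subfield $\F_p\subset\F_{p^n}$.

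The remaining $(p^n-1)-(p-1)=p^n-p$ elements of $\Z_{p^n-1}$ are partitioned into cosets of size $n$, so the number of such cosets is $\frac{p^n-p}{n}$, as claimed. That this is an integer is guaranteed by Fermat's little theorem ($p^n\equiv p\pmod n$, since $n$ is prime), which is a sanity check rather than a step in the argument. There is no real obstacle here; the only thing to be careful about is the divisibility argument $(p-1)\mid(p^n-1)$ and the fact that a congruence $ax\equiv 0\pmod m$ with $\gcd(a,m)=a$ has exactly $a$ solutions.
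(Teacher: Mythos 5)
Your proof is correct and complete; the paper simply declares this lemma ``well known and easy to verify'' and supplies no proof, so there is no argument of the authors' to compare against. Your route --- orbit sizes under multiplication by $p$ divide $n$, primality of $n$ forces size $1$ or $n$, and the fixed points are exactly the $p-1$ multiples of $\frac{p^n-1}{p-1}$ (i.e.\ the exponents of the elements of $\F_p\setminus\{0\}$), leaving $p^n-p$ elements in cosets of size $n$ --- is the standard one and fills the gap cleanly.
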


The definitions of the Frobenius mappings and the cyclotomic cosets
imply the following lemma.

\begin{lemma}
When applied on $\F_{p^n} \setminus \{ 0 \}$ the
Frobenius mappings forms an equivalence relation on
$\F_{p^n} \setminus \{ 0 \}$,
where  an equivalence class contains the powers of $\alpha$
which are exactly the elements of one cyclotomic cosets of
$\Z_{p^n-1}$.
\end{lemma}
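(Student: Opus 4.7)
The plan is to verify the two claims in the lemma separately: first that the collection $\{\Upsilon_0,\Upsilon_1,\ldots,\Upsilon_{n-1}\}$ induces an equivalence relation on $\F_{p^n}\setminus\{0\}$ by declaring $x\sim y$ whenever $y=\Upsilon_\ell(x)$ for some $\ell$, and second that when we parameterize $\F_{p^n}^\ast$ by powers of the primitive element $\alpha$, these equivalence classes are exactly the cyclotomic cosets of $\Z_{p^n-1}$.

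For the equivalence-relation part, I would note that $\Upsilon_0(x)=x^{p^0}=x$ gives reflexivity. For the composition law, a direct computation shows $\Upsilon_\ell\circ\Upsilon_m(x)=(x^{p^m})^{p^\ell}=x^{p^{\ell+m}}=\Upsilon_{(\ell+m)\bmod n}(x)$, where the reduction modulo $n$ is legitimate because every $x\in\F_{p^n}^\ast$ satisfies $x^{p^n-1}=1$, hence $x^{p^n}=x$, so $\Upsilon_n=\Upsilon_0$ is the identity. This closure under composition delivers transitivity immediately, and together with Lemma~\ref{lem:inverse} (which gives $\Upsilon_\ell^{-1}=\Upsilon_{n-\ell}$) it also yields symmetry. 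Thus the mappings form a cyclic group acting on $\F_{p^n}\setminus\{0\}$, and the orbits of any group action are automatically the equivalence classes of an equivalence relation.

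For the identification with cyclotomic cosets, I would write an arbitrary nonzero element of $\F_{p^n}$ as $\alpha^s$ with $s\in\Z_{p^n-1}$ and compute the orbit under the Frobenius group:
\[
\Upsilon_\ell(\alpha^s) = (\alpha^s)^{p^\ell} = \alpha^{s\cdot p^\ell},
\]
where the exponent is read modulo $p^n-1$. As $\ell$ ranges over $\{0,1,\ldots,n-1\}$, the exponents $s\cdot p^\ell \bmod (p^n-1)$ trace out precisely the cyclotomic coset $C_s$ by its very definition. Consequently the equivalence class of $\alpha^s$ consists of exactly those powers $\alpha^i$ with $i\in C_s$, which is the claimed bijection between equivalence classes and cyclotomic cosets of $\Z_{p^n-1}$.

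There is no real obstacle here; the whole argument is essentially an unpacking of definitions together with the Fermat-type identity $x^{p^n}=x$ on $\F_{p^n}$. The only small point that deserves explicit mention is that the reduction of the exponent modulo $n$ in the composition formula, and modulo $p^n-1$ in the orbit formula, must be carefully justified by $x^{p^n-1}=1$; once that is in place, both claims follow without further calculation.
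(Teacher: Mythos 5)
Your proof is correct and is exactly the unpacking the paper has in mind: the paper gives no explicit proof, stating only that the lemma follows from the definitions of the Frobenius mappings and the cyclotomic cosets, and your argument (group structure of $\{\Upsilon_\ell\}$ under composition giving an equivalence relation, plus the orbit computation $\Upsilon_\ell(\alpha^s)=\alpha^{s\cdot p^\ell}$ matching the definition of $C_s$) is the natural way to make that precise. No gaps.
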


Another well known result is the following lemma.

\begin{lemma}
\label{lem:isomorphism}
The finite field $\F_{p^m}$ and the vector space $\F_p^n$
are isomorphic.
\end{lemma}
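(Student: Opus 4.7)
The plan is to exhibit the isomorphism explicitly as $\F_p$-vector spaces, since this is the natural (and only sensible) interpretation of the statement: the target object $\F_p^n$ has no multiplicative structure, so the claim must be read as an isomorphism of additive groups, indeed of vector spaces over the prime field $\F_p$.

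First I would observe that $\F_p$ sits inside $\F_{p^n}$ as the prime subfield (the image of $\Z$ under the unique ring homomorphism $\Z \to \F_{p^n}$). Consequently $\F_{p^n}$ carries the structure of an $\F_p$-vector space, with addition inherited from the field and scalar multiplication inherited from the field multiplication restricted to $\F_p \times \F_{p^n}$. The vector space axioms are immediate consequences of the field axioms of $\F_{p^n}$. Next, since $\F_{p^n}$ is a finite $\F_p$-vector space with $p^n$ elements, its $\F_p$-dimension $d$ satisfies $p^d = p^n$, hence $d = n$.

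The final step is the standard fact that any two $\F_p$-vector spaces of the same finite dimension are isomorphic. Concretely, fix an $\F_p$-basis $\beta_1,\beta_2,\ldots,\beta_n$ of $\F_{p^n}$ and define
\[
\psi : \F_p^n \longrightarrow \F_{p^n}, \qquad \psi(a_1,a_2,\ldots,a_n) \deff \sum_{i=1}^n a_i \beta_i.
\]
Linearity of $\psi$ is clear, surjectivity follows because $\{\beta_i\}$ spans, and injectivity follows because $\{\beta_i\}$ is linearly independent. Thus $\psi$ is an $\F_p$-linear isomorphism.

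There is no real obstacle here; the only subtle point worth being explicit about is that the ``isomorphism'' in the statement is an isomorphism of $\F_p$-vector spaces (equivalently, of additive groups), not of rings — $\F_p^n$ with componentwise multiplication is not even an integral domain for $n \ge 2$, whereas $\F_{p^n}$ is a field.
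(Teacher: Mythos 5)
Your proof is correct and is exactly the standard argument the paper is alluding to; the paper itself gives no proof, simply labelling the lemma ``another well known result.'' You also rightly (if implicitly) repair two small issues in the statement as written: the mismatch between the exponent $m$ in $\F_{p^m}$ and the $n$ in $\F_p^n$, and the need to read ``isomorphic'' as an isomorphism of $\F_p$-vector spaces rather than of rings.
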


In view of Lemma~\ref{lem:isomorphism} we can apply the Frobenius mappings and
the cyclic shifts mapping on $\F_p^m$ exactly as they are applies
on $\F_{p^m}$. If $h: \F_{p^m} \longrightarrow \F_p^n$ is the
isomorphism from $\F_{p^m}$ to $\F_p^m$ such that $y=h(x)$ for
$y \in \F_p^m$ and $x \in \F_{p^m}$ then
$$
\Upsilon_\ell (y) \deff h(\Upsilon_\ell (x)) ~~ \text{and} ~~ \Phi_j (y) \deff h(\Phi_j (x)) ~,
$$
for every $0 \leq \ell \leq n-1$ and $0 \leq j \leq p^n-2$.

\begin{lemma}
Let $X$ and $Y$ be two $k$-dimensional subspaces of $\F_p^n$
such that there exist two integers, $\ell_1$, $0 \leq \ell_1 \leq n-1$,
and $j_1$, $0 \leq j_1 \leq p^n-1$, such that $Y= \Phi_{j_1} (\Upsilon_{\ell_1} (X))$.
Then there exist two integers, $\ell_2$, $0 \leq \ell_2 \leq n-1$,
and $j_2$, $0 \leq j_2 \leq p^n-1$, such that $Y= \Upsilon_{\ell_2} ( \Phi_{j_2} (X))$.
\end{lemma}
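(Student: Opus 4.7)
The plan is to show that the Frobenius mapping and the cyclic shift mapping commute up to a suitable reindexing of the shift parameter, i.e.\ that $\Phi_{j_1} \circ \Upsilon_{\ell_1} = \Upsilon_{\ell_1} \circ \Phi_{j_2}$ for an appropriate $j_2$ (so one may in fact take $\ell_2 = \ell_1$). Since both mappings act element-wise on a subset or subspace (and both fix $0$), this identity at the level of single field elements immediately transfers to the identity of sets $\Phi_{j_1}(\Upsilon_{\ell_1}(X)) = \Upsilon_{\ell_1}(\Phi_{j_2}(X))$, giving the claim.

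First I would work in $\F_{p^n}$ via the isomorphism $h$ from Lemma~\ref{lem:isomorphism}, and compute both sides on a generic nonzero element $\alpha^i$. On the one hand,
$$
\Phi_{j_1}(\Upsilon_{\ell_1}(\alpha^i)) = \Phi_{j_1}(\alpha^{i p^{\ell_1}}) = \alpha^{i p^{\ell_1} + j_1}.
$$
On the other hand, for any candidate $j_2$,
$$
\Upsilon_{\ell_1}(\Phi_{j_2}(\alpha^i)) = \Upsilon_{\ell_1}(\alpha^{i+j_2}) = \alpha^{(i+j_2)p^{\ell_1}} = \alpha^{i p^{\ell_1} + j_2 p^{\ell_1}}.
$$
Matching the two requires $j_2\, p^{\ell_1} \equiv j_1 \pmod{p^n-1}$.

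The next step is to invert $p^{\ell_1}$ modulo $p^n-1$. Since $\gcd(p,p^n-1) = 1$ and $p^{\ell_1}\cdot p^{n-\ell_1} = p^n \equiv 1 \pmod{p^n-1}$, the inverse is simply $p^{n-\ell_1}$. Therefore set $\ell_2 = \ell_1$ and $j_2 \equiv j_1\, p^{n-\ell_1} \pmod{p^n-1}$, with $0 \le j_2 \le p^n-2$. With this choice the two displayed expressions coincide for every $\alpha^i$, so $\Phi_{j_1} \circ \Upsilon_{\ell_1}$ and $\Upsilon_{\ell_2} \circ \Phi_{j_2}$ agree as maps on $\F_{p^n}\setminus\{0\}$.

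Finally I apply this element-wise to $X$: since both sides fix $0$ and act pointwise on nonzero elements, $Y = \Phi_{j_1}(\Upsilon_{\ell_1}(X)) = \Upsilon_{\ell_2}(\Phi_{j_2}(X))$, as required. There is no real obstacle; the only point that might seem nontrivial is the invertibility of $p^{\ell_1}$ modulo $p^n-1$, but this is an immediate consequence of $p^n \equiv 1 \pmod{p^n-1}$, and it simultaneously tells us which $\ell_2, j_2$ to pick.
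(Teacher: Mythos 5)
Your proof is correct. The paper actually states this lemma without any proof at all, so there is nothing to compare against; your argument --- computing $\Phi_{j_1}(\Upsilon_{\ell_1}(\alpha^i)) = \alpha^{ip^{\ell_1}+j_1}$ and $\Upsilon_{\ell_1}(\Phi_{j_2}(\alpha^i)) = \alpha^{ip^{\ell_1}+j_2p^{\ell_1}}$, then choosing $\ell_2=\ell_1$ and $j_2 \equiv j_1 p^{\,n-\ell_1} \pmod{p^n-1}$ using the invertibility of $p^{\ell_1}$ modulo $p^n-1$ --- is exactly the natural commutation relation the authors are implicitly relying on, and it transfers to subspaces element-wise as you say.
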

\begin{cor}
The combination of the Frobenius mappings and the cyclic shifts mappings
induce an equivalence relation of the set of all $k$-dimensional subspaces
of $\F_p^n$.
\end{cor}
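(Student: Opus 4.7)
The plan is to treat the combined action of the Frobenius and cyclic shift mappings as a group action on $\cG_p(n,k)$ and to identify the orbit relation as the equivalence relation in question. Concretely, I would define $X \sim Y$ by the existence of integers $\ell, j$ with $Y = \Phi_j(\Upsilon_\ell(X))$, and then verify reflexivity, symmetry, and transitivity using only Lemma~\ref{lem:inverse} and the immediately preceding lemma.

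Reflexivity is immediate since $\Upsilon_0$ and $\Phi_0$ are both the identity map, so $X = \Phi_0(\Upsilon_0(X))$. The main tool for the remaining two properties is the converse of the preceding lemma, namely that an expression of the form $\Upsilon_a(\Phi_b(Y))$ can also be rewritten as $\Phi_{b'}(\Upsilon_{a'}(Y))$. I would derive this by a twofold inversion: if $X = \Upsilon_a(\Phi_b(Y))$ then Lemma~\ref{lem:inverse} gives $Y = \Phi_{p^n-1-b}(\Upsilon_{n-a}(X))$; the preceding lemma (applied in its stated direction) produces integers $a'', b''$ with $Y = \Upsilon_{a''}(\Phi_{b''}(X))$; a second inversion then yields $X = \Phi_{p^n-1-b''}(\Upsilon_{n-a''}(Y))$, as required.

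Symmetry then follows quickly: from $Y = \Phi_j(\Upsilon_\ell(X))$, Lemma~\ref{lem:inverse} gives $X = \Upsilon_{n-\ell}(\Phi_{p^n-1-j}(Y))$, and the converse above converts this into the required $\Phi\circ\Upsilon$ form. For transitivity, given $Y = \Phi_{j_1}(\Upsilon_{\ell_1}(X))$ and $Z = \Phi_{j_2}(\Upsilon_{\ell_2}(Y))$, substitution gives
$$
Z = \Phi_{j_2}\bigl(\Upsilon_{\ell_2}\bigl(\Phi_{j_1}\bigl(\Upsilon_{\ell_1}(X)\bigr)\bigr)\bigr);
$$
I would then apply the converse of the preceding lemma to the inner $\Upsilon_{\ell_2}\circ\Phi_{j_1}$ and rewrite it as $\Phi_{j_1'}\circ\Upsilon_{\ell_1'}$, and absorb the resulting consecutive $\Phi$'s and $\Upsilon$'s using $\Phi_a\circ\Phi_b = \Phi_{a+b}$ and $\Upsilon_a\circ\Upsilon_b = \Upsilon_{a+b}$ (with indices taken modulo $p^n-1$ and $n$, respectively) to produce a single pair $(\ell,j)$ witnessing $X\sim Z$.

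The principal obstacle is that the preceding lemma is stated in only one direction, so both symmetry and transitivity must route through the converse direction; once that converse is derived by the double-inversion trick above, everything else reduces to straightforward index bookkeeping.
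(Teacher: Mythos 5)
Your proposal is correct and follows exactly the route the paper intends: the paper states this corollary without proof, as an immediate consequence of the preceding commutation lemma together with the invertibility lemma, which is precisely the machinery you invoke for reflexivity, symmetry, and transitivity. The double-inversion trick you use to extract the converse direction of the commutation lemma (rewriting $\Upsilon_a\circ\Phi_b$ as $\Phi_{b'}\circ\Upsilon_{a'}$) is a detail the paper glosses over entirely, and you handle it correctly.
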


%Let $\alpha$ be a primitive element is $\F_{2^n}$.
%We say that a code $\C \subseteq \cG_2 (n,k)$ is
%\emph{cyclic} if it has the following property:
%whenever $\{ {\bf 0}, \alpha^{i_1},\alpha^{i_2},\ldots,
%\alpha^{i_m} \}$ is a codeword of $\C$, so is its
%cyclic shift $\{ {\bf 0}, \alpha^{i_1+1},\alpha^{i_2+1},\ldots,
%\alpha^{i_m+1} \}$. In other words, if we map each vector
%space $V \in \C$ into the corresponding binary
%\emph{characteristic vector} $x_V = (x_0,x_1,\ldots,x_{2^n-2})$
%given by
%$$
%x_i=1 ~ \text{if}~ \alpha^i \in V~ \text{and} ~ x_i =0~\text{if}~\alpha^i \not\in V
%$$
%then the set of all such characteristic vectors is
%closed under cyclic shifts. Note that the property
%of being cyclic does not depend on the choice of the
%primitive element $\alpha \in \F_{2^n}$.

\section{Parallelism of lines in PG($n,p$)}
\label{sec:parallel}

Let $\alpha$ be a primitive element in $\F_{p^n}$, where
$p$ and $n$ are primes. Let
$\cP_0 = \{ (\alpha^i,0) ~:~ 0 \leq i \leq \frac{p^n-1}{p-1} -1 \}$
and $\cP_1 = \{ (x,1) ~:~ x \in \F_{p^n} \}$.
The points of the projective geometry PG($n,p$) are
represented by $\cP_0 \cup \cP_1$.

The Frobenius mapping $\Upsilon_\ell$ is applied on a subset
$X \in \cP_0 \cup \cP_1$ as follows. For a given point
$( \alpha^i ,0) \in \cP_0$, and an integer $\ell$, we define $\Upsilon_\ell (( \alpha^i ,0))
\deff (\alpha^{p^\ell \cdot i},0)$, where the exponent is taken modulo
$\frac{p^n-1}{p-1}$. For a given point $(y,1) \in \cP_1$, we have
$y = \alpha^i$, for some $0 \leq i \leq p^n-2$. We define
$\Upsilon_\ell ((y,1)) \deff ( \alpha^{p^\ell \cdot i} ,1)$,
where the exponent is taken modulo $p^n-1$;
if $y=0$ then we define $\Upsilon_\ell ((0,1)) \deff (0,1)$. If
$X = \{ x_1 , x_2 , \ldots , x_m \}$ then
$\Upsilon_\ell (X) \deff \{ \Upsilon_\ell (x_1) , \Upsilon_\ell (x_2), \ldots ,
\Upsilon_\ell (x_m) \}$.

Similarly, the cyclic shift mapping is applied on a subset
$X \in \cP_0 \cup \cP_1$. For a given point
$( \alpha^i ,0) \in \cP_0$, and an integer $j$, we defined $\Phi_j (( \alpha^i ,0))
\deff (\alpha^{i+j},0)$, where the exponent is taken modulo
$\frac{p^n-1}{p-1}$. For a given point $(y,1) \in \cP_1$,
we have $y = \alpha^i$, for some $0 \leq i \leq p^n-2$. We define
$\Phi_j ((y,1)) \deff ( \alpha^{i+j} ,1)$, where
the exponent is taken modulo $p^n-1$;
if $y=0$ then we define $\Phi_j ((0,1)) \deff (0,1)$. If
$X = \{ x_1 , x_2 , \ldots , x_m \}$ then we define
$\Phi_j (X) \deff \{ \Phi_j (x_1) , \Phi_j (x_2), \ldots ,
\Phi_j (x_m) \}$.

The results on the two types
of mappings given in Section~\ref{sec:map}
hold also for this representation of PG($n,p$) and these
modified mappings.
In particular each one of the two types of mappings and
the combination of the two induce an equivalence
relation on the set of all $k$-dimensional subspaces of $\F_p^n$,
also for this representation of PG($n,p$).

A line in PG($n,p$) consists of $p+1$ points.
Either all the $p+1$ points are contained in $\cP_0$
or exactly one point is contained in $\cP_0$ and
$p$ points are contained in $\cP_1$. A 1-spread (or spread in short)
contains $p^{n-1} +p^{n-3} + \cdots + p^2 +1$ lines,
from which exactly $p^{n-1}$ contain points from $\cP_1$.

We construct the first spread as follows. The spread contains
three types of lines.
\begin{enumerate}
\item
The first type contains only one line. This line contains the
points $(\alpha^0 , 0)$, $(0,1)$, and $(\alpha^{{\frac{p^n-1}{p-1}}i},1)$,
for all $0 \leq i \leq p-2$.

\item
The second type contains $p^{n-1}-1$ lines, each line
has exactly one point from $\cP_0$ and~$p$ points from $\cP_1$.
$X$ is a line of this type if and only if $\Upsilon_\ell (X)$
is a line of this type, for every $\ell$, $0 \leq \ell \leq n-1$.
The lines of this type are partitioned
into $p-1$ sets. Each set has $\frac{p^{n-1}-1}{p-1}$ lines.
For each line $X$ in the $i$-th set there is a line $Y$
in the $(i+1)$-th set such that $Y = \Phi_{(p-1)j+1} (X)$
for some integer $j$.
For every two distinct given lines of this type, $X$ and $Y$,
there is no $j$ such that $Y = \Phi_{(p-1)j} (X)$.
Therefore, we can consider for this type
$\frac{p^{n-1}-1}{(p-1)n}$ lines as base lines. Each such line
is shifted $p-1$ shifts to form base lines for
each set, one for each residue modulo $p-1$.
On these shifts we apply the Frobenius mappings. Note, that
since $t \cdot p \equiv t~(\bmod~p-1)$ the Frobenius mappings
preserve the set in which the subspace of this type is contained.

\item
The third type contains
$\frac{p^{n-1}-1}{p^2-1}=p^{n-3} +p^{n-5}+ \cdots + p^2 +1$
lines, where
all the points of each line are contained in $\cP_0$.
$X$ is a line of this type if and only if $\Upsilon_1 (X)$
is a line of this type. For each two given distinct lines
of this type, $X$ and $Y$,
there is no $j$ such that $Y = \Phi_j (X)$. Therefore,
we can view $\frac{p^{n-3} +p^{n-5}+ \cdots + p^2 +1}{n}$
of these lines as generator lines. The Frobenius mappings are
applied on these generator lines.
\end{enumerate}

The base line and the generator lines should have some difference
properties. We skip the description of these difference properties
and just mention that they are similar to the ones described in Section~\ref{sec:Steiner}.

The $(i+1)$-th spread, $\dS_{i+1}$, is generated from the $i$-th spread, $\dS_i$,
as follows. If $X$ is a line of~$\dS_i$ then $\Phi_{p-1} (X)$ is a line
in $\dS_{i+1}$.
In this way we generate a total of $\frac{p^n-1}{p-1}$
pairwise disjoint spreads.
It can be verified that if we want to
obtain distinct lines in the
$\frac{p^n-1}{p-1}$ spreads then $p-1$ must be relatively prime
to $\frac{p^n-1}{p-1}$.

Clearly, our description of the construction implies some divisibility
conditions. Some of these conditions always hold and only the necessary
proof should be given.
The conditions which are not always satisfied are
summarized in the following lemma.

\begin{lemma}
\label{lem:condS}
For the construction of $\frac{p^n-1}{p-1}$ pairwise disjoint
spreads, given in this section, the following conditions must hold.
\begin{enumerate}
\item $n$ is odd.

\item $n$ divides $\frac{p^{n-1}-1}{p^2-1}$.

\item $\gcd (p-1, \frac{p^n-1}{p-1})=1$.
\end{enumerate}
\end{lemma}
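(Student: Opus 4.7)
The plan is to derive each of the three conditions as a necessary consequence of an integrality requirement on a count, or of an order requirement on the cyclic shift, that is built into the construction of Section~\ref{sec:parallel}.

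For condition~(1), a 1-spread in PG($n,p$) partitions the $\frac{p^{n+1}-1}{p-1}$ points into blocks of $p+1$, so the spread has $\frac{p^{n+1}-1}{p^2-1}$ lines. This is integral iff $p^2-1 \mid p^{n+1}-1$, equivalently iff $2 \mid n+1$, i.e., $n$ is odd. The same conclusion also follows directly from the explicit type-3 count $\frac{p^{n-1}-1}{p^2-1} = p^{n-3}+p^{n-5}+\cdots+p^2+1$: the descending geometric series closes at $+1$ only when the exponents $n-3,n-5,\ldots$ reach $0$, again forcing $n$ odd.

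For condition~(2), the construction writes the $\frac{p^{n-1}-1}{p^2-1}$ type-3 lines as Frobenius orbits, picking one generator line per orbit while the remaining $n-1$ members of each orbit are its Frobenius images. Hence the number of generators, $\frac{1}{n}\cdot\frac{p^{n-1}-1}{p^2-1}$, must be a positive integer, which yields $n\mid\frac{p^{n-1}-1}{p^2-1}$. The underlying assertion, and the main technical obstacle in the proof, is that every Frobenius orbit on type-3 lines really has size $n$ rather than $1$. I would justify this by observing that, because $n$ is prime, every cyclotomic orbit has size $1$ or $n$, and a Frobenius-fixed $2$-dimensional $\F_p$-subspace of $\F_{p^n}$ would force an order-$n$ element of $GL_2(\F_p)$ to stabilise it, which in the regimes where the other two conditions also hold can be ruled out by checking $n\nmid p^2-1$.

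For condition~(3), since $\dS_i = \Phi_{(p-1)i}(\dS_0)$, two distinct spreads $\dS_i$ and $\dS_j$ share a line iff some $\Phi_{(p-1)k}$ with $0<k<\frac{p^n-1}{p-1}$ stabilises a line of $\dS_0$. Setting $M:=\frac{p^n-1}{p-1}$ and focusing on any type-3 line $L\subseteq\cP_0$, the shift $\Phi_{(p-1)k}$ acts on the exponents of $L$ by translation by $(p-1)k\bmod M$. This translation is nontrivial for every $0<k<M$ iff the additive order of $p-1$ in $\Z_M$ equals $M$, equivalently iff $\gcd(p-1,M)=1$. If instead $d:=\gcd(p-1,M)>1$, then $\Phi_{(p-1)(M/d)}$ already acts as the identity on $\cP_0$, so $\dS_0$ and $\dS_{M/d}$ coincide on every type-3 line, violating pairwise disjointness; the remaining checks reduce to elementary cyclic-order computations in $\Z_{p^n-1}$ and $\Z_M$.
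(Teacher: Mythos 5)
Your proposal is correct, and it is essentially the argument the paper intends: the paper states this lemma \emph{without proof}, remarking only that ``our description of the construction implies some divisibility conditions,'' and your three derivations are precisely the conditions that description forces --- integrality of the number of lines $\frac{p^{n+1}-1}{p^2-1}$ in a spread for~(1), integrality of the number $\frac{p^{n-1}-1}{n(p^2-1)}$ of generator lines of the third type for~(2), and the requirement that $p-1$ have full additive order in $\Z_{(p^n-1)/(p-1)}$ so that the shifts $\Phi_{(p-1)i}$ produce distinct images of the lines lying in $\cP_0$ for~(3). One small caveat on your aside under condition~(2): a line of the third type corresponds to a $2$-dimensional $\F_p$-subspace of $\F_{p^n}$, and ruling out a Frobenius-fixed such line via an order-$n$ element of $GL_2(\F_p)$ requires $n \nmid \abs{GL_2(\F_p)} = p(p-1)^2(p+1)$, not merely $n \nmid p^2-1$ (the case $n=p$ is not excluded by your check). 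This, however, concerns the internal consistency of the construction rather than the necessity of the stated divisibility, so it does not affect the validity of your proof of the lemma as stated.
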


We conjecture that
if the conditions of Lemma~\ref{lem:condS} are satisfied
then there is a parallelism of lines in PG($n,p$) obtained
by the method given in this section.

We demonstrate the construction to one of a numerous
number of solutions for $p=3$ and
$n=5$. Let $\alpha$ be a root
of the primitive polynomial $x^5+2x^4+1$ over $\F_3$.
There are 8 base lines of the second type.
They are shifted by  1, 5, 29, 111, 61, 187, 129, and 125, respectively for
the odd shifts, and by 218, 8, 12, 230, 150, 132, 202, 40, respectively
for the even shifts.
The outcome are the sixteen lines given in the following table.

\begin{table}[h]
\centering
\begin{scriptsize}
\begin{tabular}{|c|c|c|c|}
\hline $~$ &   base line & odd shift &even shift
\tabularnewline \hline \hline
& & & \\
1 &  $\{ (\alpha^5,0), (\alpha^0,1) , (\alpha^1,1) , (\alpha^{190},1) \}$ &
 $\{ (\alpha^6,0), (\alpha^1,1) , (\alpha^2,1) , (\alpha^{191},1) \}$ &
 $\{ (\alpha^{102},0), (\alpha^{218},1) , (\alpha^{219},1) , (\alpha^{166},1) \}$
\tabularnewline \hline
& & & \\
2 &  $\{ (\alpha^{74},0), (\alpha^0,1) , (\alpha^2,1) , (\alpha^{167},1) \}$ &
 $\{ (\alpha^{79},0), (\alpha^5,1) , (\alpha^7,1) , (\alpha^{172},1) \}$ &
 $\{ (\alpha^{82},0), (\alpha^8,1) , (\alpha^{10},1) , (\alpha^{175},1) \}$
\tabularnewline \hline
& & & \\
3 &  $\{ (\alpha^{120},0), (\alpha^0,1) , (\alpha^4,1) , (\alpha^{68},1) \}$ &
 $\{ (\alpha^{28},0), (\alpha^{29},1) , (\alpha^{33},1) , (\alpha^{97},1) \}$ &
 $\{ (\alpha^{11},0), (\alpha^{12},1) , (\alpha^{16},1) , (\alpha^{80},1) \}$
 \tabularnewline \hline
& & & \\
4 &  $\{ (\alpha^{69},0), (\alpha^0,1) , (\alpha^5,1) , (\alpha^{122},1) \}$ &
 $\{ (\alpha^{59},0), (\alpha^{111},1) , (\alpha^{116},1) , (\alpha^{233},1) \}$ &
 $\{ (\alpha^{57},0), (\alpha^{230},1) , (\alpha^{235},1) , (\alpha^{110},1) \}$
 \tabularnewline \hline
& & & \\
5 & $\{ (\alpha^{67},0), (\alpha^0,1) , (\alpha^8,1) , (\alpha^{220},1) \}$ &
 $\{ (\alpha^7,0), (\alpha^{61},1) , (\alpha^{69},1) , (\alpha^{39},1) \}$ &
 $\{ (\alpha^{96},0), (\alpha^{150},1) , (\alpha^{158},1) , (\alpha^{128},1) \}$
 \tabularnewline \hline
& & & \\
6 &  $\{ (\alpha^{27},0), (\alpha^0,1) , (\alpha^{14},1) , (\alpha^{169},1) \}$ &
 $\{ (\alpha^{93},0), (\alpha^{187},1) , (\alpha^{201},1) , (\alpha^{114},1) \}$ &
 $\{ (\alpha^{38},0), (\alpha^{132},1) , (\alpha^{146},1) , (\alpha^{59},1) \}$
 \tabularnewline \hline
& & & \\
7 & $\{ (\alpha^{37},0), (\alpha^0,1) , (\alpha^{20},1) , (\alpha^{147},1) \}$ &
 $\{ (\alpha^{45},0), (\alpha^{129},1) , (\alpha^{149},1) , (\alpha^{34},1) \}$ &
 $\{ (\alpha^{118},0), (\alpha^{202},1) , (\alpha^{222},1) , (\alpha^{107},1) \}$
 \tabularnewline \hline
& & & \\
8 & $\{ (\alpha^{97},0), (\alpha^0,1) , (\alpha^{31},1) , (\alpha^{177},1) \}$ &
 $\{ (\alpha^{101},0), (\alpha^{125},1) , (\alpha^{156},1) , (\alpha^{60},1) \}$ &
 $\{ (\alpha^{16},0), (\alpha^{40},1) , (\alpha^{71},1) , (\alpha^{217},1) \}$
 \tabularnewline \hline

\end{tabular}
\end{scriptsize}
\end{table}
The generator lines of the third type are
$$\{ (\alpha^8,0),(\alpha^9,0),(\alpha^{13},0),(\alpha^{77},0) \}$$
and
$$\{ (\alpha^{73},0),(\alpha^{75},0),(\alpha^{119},0),(\alpha^{26},0) \}~.$$

Thus, a parallelism of lines with 121 disjoint spreads in PG(5,3) is obtained.

\section{Steiner structures}
\label{sec:Steiner}

We suggest to construct a set $\dS$ of 3-dimensional
subspaces of $\F_2^n$, $n$ prime,
in which the cyclic shifts mappings
and the Frobenius mappings form its automorphism group.
The nonzero elements of the field will be represented as one cycle
for this construction. Given a 3-dimensional subspace
$$\{ {\bf 0} , \alpha^{i_1}, \alpha^{i_2} , \alpha^{i_3},
\alpha^{i_4} , \alpha^{i_5} , \alpha^{i_6} , \alpha^{i_7} \}$$
in $\dS$, we require that for each $0 \leq \ell \leq n-1$
and $0 \leq j \leq 2^n-2$,
$$\{ {\bf 0} , \alpha^{i_1 2^\ell +j}, \alpha^{i_2 2^\ell +j} , \alpha^{i_3 2^\ell +j},
\alpha^{i_4 2^\ell +j} , \alpha^{i_5 2^\ell +j} , \alpha^{i_6 2^\ell +j} , \alpha^{i_7 2^\ell +j} \}$$
will be also a 3-dimensional subspace of $\dS$. In other words, $X \in \dS$ if and only if
${\Phi_j (  \Upsilon_\ell (X)) \in \dS}$, for every  $0 \leq \ell \leq n-1$ and $0 \leq j \leq p^n-2$.

For a given 3-dimensional subspace
$$X=\{ {\bf 0} , \alpha^{i_1}, \alpha^{i_2} , \alpha^{i_3},
\alpha^{i_4} , \alpha^{i_5} , \alpha^{i_6} , \alpha^{i_7} \}$$
of $\F_2^n$, let the \emph{difference set} of $X$, $\Delta (X)$,
be the set of integers defined by
$$
\Delta (X) \deff \{ i_r - i_s ~:~  1 \leq r,~s \leq 7,~ r \neq s \}~.
$$

\begin{lemma}
If $X$ is a 3-dimensional subspace of $\F_2^n$, where $2^n-1 \not\equiv 0~(\bmod~7)$,
then the cyclic shifts of $X$ form $2^n-1$ distinct 3-dimensional subspaces.
\end{lemma}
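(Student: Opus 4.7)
The plan is to prove the contrapositive in the usual way: show that the stabilizer of $X$ under the cyclic shift group is trivial, i.e.\ that $\Phi_j(X)=X$ with $0\leq j\leq 2^n-2$ forces $j=0$. From this the conclusion is immediate, since $\Phi_i(X)=\Phi_j(X)$ would give $\Phi_{i-j}(X)=X$, and there are $2^n-1$ values of $j$ in the range under consideration.

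The key observation is that, under the identification of $\F_2^n$ with $\F_{2^n}$, the cyclic shift $\Phi_j$ is exactly multiplication by $\alpha^j$, which is an $\F_2$-linear bijection of $\F_{2^n}$. In particular $\Phi_j$ preserves dimension (so the shifts of $X$ really are $3$-dimensional subspaces), fixes $\mathbf{0}$, and acts on $\F_{2^n}\setminus\{0\}$ with \emph{all} orbits of the same cardinality
\[
d \;=\; \frac{2^n-1}{\gcd(j,\,2^n-1)},
\]
namely the multiplicative order of $\alpha^j$ in $\F_{2^n}^{*}$.

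Now I would argue as follows. Suppose $\Phi_j(X)=X$. Then $\Phi_j$ restricts to a permutation of the $7$-element set $X\setminus\{\mathbf{0}\}$, and this restriction is a union of $\langle\Phi_j\rangle$-orbits, each of the common size $d$. Consequently $d$ divides $7$, so $d\in\{1,7\}$. The case $d=7$ would force $7\mid 2^n-1$, contradicting the hypothesis. Hence $d=1$, which means $\alpha^j=1$ and therefore $j=0$ in the stated range.

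There is essentially no obstacle: the only non-trivial ingredient is the uniformity of orbit sizes under multiplication by a single element of a cyclic group, which is standard. The role of the hypothesis $2^n-1\not\equiv 0\pmod 7$ is exactly to rule out the one alternative orbit structure ($d=7$) that a $7$-point set admits, so any attempt to weaken that hypothesis would need a genuinely different argument.
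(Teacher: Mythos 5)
Your proof is correct. The paper states this lemma without giving any proof, and your argument --- showing the stabilizer of $X$ under the shift group is trivial because the orbits of $\langle\alpha^j\rangle$ on $\F_{2^n}^*$ all have the common size $d=\mathrm{ord}(\alpha^j)$, forcing $d\mid 7$ on the $7$-element set $X\setminus\{\mathbf{0}\}$, with $d=7$ excluded by the hypothesis $7\nmid 2^n-1$ --- is the standard and evidently intended one.
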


A 3-dimensional subspace $X$ of $\F_2^n$ will be called \emph{complete}
if $| \Delta (X)| =42$. Two complete 3-dimensional subspaces $X,~Y$ of $\F_2^n$
will be called \emph{disjoint complete} if $\Delta (X) \cap \Delta (Y) = \varnothing$.
Each 3-dimensional subspace $X$ of $\F_2^n$ contains seven two-dimensional
subspaces of $\F_2^n$. If $| \Delta (X)| =42$ then no two of these seven
seven two-dimensional subspaces are cyclic shifts of each other.

\begin{lemma}
If $X$ is a 3-dimensional subspace of $\F_2^n$ and $| \Delta (X)| =42$
then the cyclic shifts of $X$ form $2^n-1$ distinct 3-dimensional subspaces.
The $7 \cdot (2^n-1)$ two-dimensional subspaces of these
$2^n-1$ distinct 3-dimensional subspaces are all distinct.
\end{lemma}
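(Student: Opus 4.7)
The plan is to exploit the hypothesis $|\Delta(X)|=42$, which forces the $7\cdot 6=42$ ordered nonzero differences $i_r-i_s$ (taken mod $2^n-1$) to be pairwise distinct; in particular, every nonzero value that occurs in $\Delta(X)$ is realized by exactly one ordered pair $(r,s)$.

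For the first assertion, I would show directly that $\Phi_j(X)\neq X$ whenever $0<j<2^n-1$. If $\Phi_j(X)=X$, then $\Phi_j$ restricts to a permutation $\pi$ of the set $\{\alpha^{i_1},\ldots,\alpha^{i_7}\}$, and writing $\pi(\alpha^{i_r})=\alpha^{i_{\sigma(r)}}$ yields seven equations $i_{\sigma(r)}-i_r\equiv j\pmod{2^n-1}$. Because $j\neq 0$, we have $\sigma(r)\neq r$ for every $r$, so all seven ordered pairs $(\sigma(r),r)$ are distinct with $\sigma(r)\neq r$; thus $j$ would be realized by seven different ordered pairs as a difference in $\Delta(X)$, contradicting $|\Delta(X)|=42$. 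Hence the $2^n-1$ shifts $X,\Phi_1(X),\ldots,\Phi_{2^n-2}(X)$ are pairwise distinct.

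For the second assertion, the seven $2$-dimensional subspaces inside any single shift $\Phi_j(X)$ are distinct by construction (they are the seven distinct planes of a fixed $3$-space). So the only thing to rule out is a coincidence across two different shifts: a $2$-dimensional subspace $V_1\subset\Phi_{j_1}(X)$ equal to $V_2\subset\Phi_{j_2}(X)$ with $j_1\neq j_2$. Applying $\Phi_{-j_1}$ and setting $j=j_2-j_1\not\equiv 0$, I may assume $V_1\subset X$ and $V_2\subset\Phi_j(X)$ are equal. Writing the three nonzero vectors of $V_1$ as $\{\alpha^{i_a},\alpha^{i_b},\alpha^{i_c}\}$ and those of $V_2$ as $\{\alpha^{i_{a'}+j},\alpha^{i_{b'}+j},\alpha^{i_{c'}+j}\}$, equality of the sets gives a bijection $\sigma:\{a,b,c\}\to\{a',b',c'\}$ with $i_x\equiv i_{\sigma(x)}+j\pmod{2^n-1}$ for $x\in\{a,b,c\}$. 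Since $j\neq 0$ forces $x\neq\sigma(x)$, we obtain three distinct ordered pairs $(x,\sigma(x))$ all producing the same difference $j$, again contradicting $|\Delta(X)|=42$.

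The main obstacle is a bookkeeping subtlety in the second step: I must verify that the three ordered pairs $(x,\sigma(x))$ really are distinct and lie in the index range $\{1,\ldots,7\}$ with $x\neq\sigma(x)$, so that they genuinely contribute three separate elements of $\Delta(X)$. Distinctness of the first coordinates is automatic because $a,b,c$ are distinct; distinctness from the diagonal follows from $j\neq 0$; and that each pair has both coordinates in $\{1,\ldots,7\}$ is built into the setup. Everything else is a direct deduction from the hypothesis that the 42 differences are all different.
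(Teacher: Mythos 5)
Your proof is correct and follows what is essentially the paper's intended route: the paper states this lemma without proof, but the remark immediately preceding it (that when $\left|\Delta(X)\right|=42$ no two of the seven two-dimensional subspaces of $X$ are cyclic shifts of each other) is exactly the fact your second step establishes via the pigeonhole argument on repeated differences, and your first step is the same difference-counting idea applied to a full stabilizing shift. Both assertions are handled completely, so nothing is missing.
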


\begin{theorem}
If $n \equiv 1~(\bmod~6)$ and
there exist $\frac{2^n-2}{42}$ pairwise disjoint complete
3-dimensional subspaces then there exists a Steiner structure
$\dS_2[2,3,n]$.
\end{theorem}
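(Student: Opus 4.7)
The plan is to form $\dS$ as the union of the cyclic-shift orbits of the $\frac{2^n-2}{42}$ given pairwise disjoint complete 3-dimensional subspaces, and to verify by a counting/pigeonhole argument that every 2-dimensional subspace of $\F_2^n$ is contained in exactly one element of $\dS$.

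First I would check that the divisibility conditions line up. Since $n\equiv 1~(\bmod~6)$, the order of $2$ modulo $7$ being $3$ gives $2^n\equiv 2~(\bmod~7)$, so $7\mid 2^n-2$ and $7\nmid 2^n-1$; together with the trivial $2\mid 2^n-2$ and $3\mid 2^{n-1}-1$ (which follows from $n-1$ being even), this shows $42\mid 2^n-2$, so the hypothesized number of bases is an integer. Furthermore, $7\nmid 2^n-1$ is exactly the hypothesis needed to invoke the lemma immediately preceding the theorem: for every complete base $X$ the $2^n-1$ cyclic shifts of $X$ are distinct 3-dimensional subspaces, and the $7(2^n-1)$ two-dimensional subspaces they contain are pairwise distinct.

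Next I would match the counts. The number of 2-dimensional subspaces of $\F_2^n$ is $\frac{(2^n-1)(2^{n-1}-1)}{3}$, and each 3-dimensional subspace contains seven of them; hence any Steiner structure $\dS_2[2,3,n]$ must comprise exactly $\frac{(2^n-1)(2^{n-1}-1)}{21}$ blocks. The candidate $\dS$ described above has
\[
\frac{2^n-2}{42}\cdot(2^n-1)=\frac{(2^n-1)(2^{n-1}-1)}{21}
\]
blocks, covering (with multiplicity) $\frac{2^n-2}{42}\cdot 7(2^n-1)=\frac{(2^n-1)(2^{n-1}-1)}{3}$ incidences of 2-dimensional subspaces in blocks. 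Thus, provided no 2-dimensional subspace is covered twice, the incidence count equals the total number of 2-dimensional subspaces of $\F_2^n$, and $\dS$ is forced to be the desired Steiner structure by pigeonhole.

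The main, and essentially only nontrivial, step is therefore to show that 2-dimensional subspaces from the orbits of two distinct bases are always distinct. Within a single orbit this is the preceding lemma. Across orbits of distinct bases $X$ and $Y$ I would exploit the disjoint-complete hypothesis $\Delta(X)\cap\Delta(Y)=\varnothing$: for any 2-dimensional subspace $\{{\bf 0},\alpha^a,\alpha^b,\alpha^c\}$ the six exponent differences $a-b,\,b-a,\,a-c,\,c-a,\,b-c,\,c-b$ are nonzero and are manifestly invariant under every cyclic shift $\Phi_j$, since shifting adds the same constant to all exponents. Hence the difference set of any 2-dimensional subspace appearing in the cyclic orbit of $X$ is contained in $\Delta(X)$, and similarly for $Y$. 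A 2-dimensional subspace lying in both orbits would therefore produce a common nonzero element of $\Delta(X)\cap\Delta(Y)$, contradicting disjointness. With this cross-orbit disjointness in hand, the count above closes the argument; the only delicate part is the bookkeeping that differences are preserved under cyclic shift, after which the hypothesis ``complete'' is used purely to activate the preceding lemma.
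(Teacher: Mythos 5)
Your proof is correct and takes essentially the approach the paper intends: the theorem is stated without an explicit proof, but the two lemmas immediately preceding it (distinctness of the $2^n-1$ cyclic shifts and of the $7(2^n-1)$ two-dimensional subspaces in the orbit of a complete subspace) set up exactly the orbit-union-and-counting argument you give, with the disjoint-complete hypothesis ruling out cross-orbit collisions via the shift-invariance of difference sets.
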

For $n=7$ there exist only two pairwise disjoint complete
3-dimensional subspaces of $\F_2^7$ instead of three needed for a Steiner
structure $\dS_2[2,3,7]$. If $n=13$ then $\frac{2^{13}-2}{42} =195$
and the search for 195 pairwise disjoint complete
3-dimensional subspaces of $\F_2^{13}$ looks to be out of reach.
Therefore, we try to increase the size of the automorphism group
by adding the Frobenius mappings into the equation.

For a given 3-dimensional subspace
$$X=\{ {\bf 0} , \alpha^{i_1}, \alpha^{i_2} , \alpha^{i_3},
\alpha^{i_4} , \alpha^{i_5} , \alpha^{i_6} , \alpha^{i_7} \}$$
of $\F_2^n$ let the \emph{coset difference set} of $X$, $C(\Delta (X))$,
be the set of integers defined by
$$
C(\Delta (X)) \deff \{ C(i_r - i_s) ~:~  1 \leq r,~s \leq 7,~ r \neq s \}~.
$$

A 3-dimensional subspace $X$ of $\F_2^n$ will be called \emph{coset complete}
if $| C(\Delta (X)| =42$. Two coset complete 3-dimensional subspaces $X,~Y$ of $\F_2^n$
will be called \emph{disjoint coset complete} if $C(\Delta (X)) \cap C(\Delta (Y)) = \varnothing$.

\begin{theorem}
If $n \equiv 1~(\bmod~6)$ is a prime and
there exist $\frac{2^n-2}{42n}$ pairwise disjoint coset complete
3-dimensional subspaces then there exists a Steiner structure
$\dS_2[2,3,n]$.
\end{theorem}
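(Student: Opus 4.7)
The plan is to let $\dS$ be the union of the orbits, under the group $G = \{\Phi_j \Upsilon_\ell : 0 \leq j < 2^n-1,\ 0 \leq \ell < n\}$ of order $n(2^n-1)$, of the $N = (2^n-2)/(42n)$ given pairwise disjoint coset complete subspaces $X_1,\dots,X_N$. If one can establish that (i) each orbit has exactly $n(2^n-1)$ elements and (ii) distinct base subspaces give disjoint orbits, then $|\dS| = (2^n-1)(2^n-2)/42$. Since each 3-dimensional subspace contains seven 2-dimensional subspaces and $\F_2^n$ has $(2^n-1)(2^n-2)/6$ of them in total, a simple double-counting reduces the theorem to the single assertion: \emph{no 2-dimensional subspace of $\F_2^n$ lies in two distinct members of $\dS$.}

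The main tool is that $G$ preserves the coset difference set, because $\Phi_j$ leaves $\Delta$ unchanged while $\Upsilon_\ell$ multiplies each element of $\Delta$ by $2^\ell$, which preserves cyclotomic cosets. Orbit disjointness (ii) is then immediate, since $X_{i'} = g X_i$ would force $C(\Delta(X_i)) = C(\Delta(X_{i'}))$, contradicting the hypothesis. For the trivial stabilizer needed in (i), suppose $h = \Phi_j \Upsilon_\ell$ fixes $X_i$, so $\Delta(X_i) = 2^\ell \Delta(X_i)$; were $\ell \neq 0$, then since $n$ is prime the set $\Delta(X_i)$ would be a union of cyclotomic cosets of size $n$, forcing $|C(\Delta(X_i))| = 42/n < 42$ and contradicting coset completeness. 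Hence $\ell = 0$, and the preceding lemma forces $j = 0$. The same invariance disposes of the cross-orbit case of the coverage problem: if $V$ lies in $X \in O_i$ and $X' \in O_{i'}$ with $i \neq i'$, then $C(\Delta(V)) \subseteq C(\Delta(X_i)) \cap C(\Delta(X_{i'})) = \varnothing$, absurd.

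The within-orbit case is the main technical point. Writing $X = g X_i$ and $X' = g' X_i$, pull $V$ back to $V_1 = g^{-1} V$ and $V_2 = (g')^{-1} V$, both 2-dimensional subspaces of $X_i$. The seven 2-dimensional subspaces of $X_i$ correspond to the seven lines of the Fano plane on the index set $\{1,\dots,7\}$, and their six-element difference sets partition $\Delta(X_i)$ (any two indices lie on exactly one Fano line, contributing two ordered differences). Coset completeness upgrades this to a partition of $C(\Delta(X_i))$ into seven disjoint $6$-subsets, so a 2-dimensional subspace of $X_i$ is determined by its coset difference set. Since $V_2 = h V_1$ with $h = (g')^{-1} g \in G$, coset invariance gives $C(\Delta(V_1)) = C(\Delta(V_2))$, whence $V_1 = V_2$.

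It then remains to show the stabilizer of $V_1$ in $G$ is trivial, so $h = e$ and $X = X'$. If $h = \Phi_j \Upsilon_\ell$ fixes $V_1$, then $\Delta(V_1) = 2^\ell \Delta(V_1)$; the six elements of $\Delta(V_1)$ lie in six distinct cyclotomic cosets (as a subset of $\Delta(X_i)$), so this equation forces $2^\ell d = d$ for every $d \in \Delta(V_1)$, impossible for $\ell \neq 0$ because every nonzero cyclotomic coset in $\Z_{2^n-1}$ has size $n$ when $n$ is prime. Hence $\ell = 0$; then $\Phi_j V_1 = V_1$ permutes the three nonzero exponents of $V_1$ by adding $j$, giving $3j \equiv 0 \pmod{2^n-1}$, and since $n \equiv 1 \pmod 6$ forces $n$ odd we have $\gcd(3, 2^n-1) = 1$, so $j = 0$. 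The main obstacle is precisely this within-orbit step, where the Fano-plane partition of $\Delta(X_i)$ must interact correctly with the cyclotomic-coset structure of $\Z_{2^n-1}$; coset completeness is the exact hypothesis engineered to make this interaction work.
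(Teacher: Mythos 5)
The paper states this theorem without proof, so there is no in-paper argument to compare against; your proposal is correct and is the natural formalization of what the paper leaves implicit: take $\dS$ to be the union of the orbits under the combined cyclic-shift/Frobenius group, use coset completeness to force trivial stabilizers (so the counts multiply out to $\frac{(2^n-1)(2^n-2)}{42}$ blocks), and then show no $2$-dimensional subspace is covered twice. Your key within-orbit step --- that the seven $2$-dimensional subspaces of a coset complete $X_i$ have pairwise disjoint coset difference sets, so each is determined by its coset difference set --- is precisely the extension to the larger group of what the paper's Lemma~8 records for the shift-only case, and the remaining verifications (orbit disjointness from disjoint coset completeness, $\gcd(3,2^n-1)=\gcd(7,2^n-1)=1$ from $n\equiv 1 \pmod 6$, and size-$n$ cosets from $n$ prime) are all sound.
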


A search for pairwise disjoint
coset complete 3-dimensional subspaces of $\F_2^n$ is done as follows.
$\{ {\bf 0}, \alpha^{i_1} , \alpha^{i_2}, \alpha^{i_3} \}$
is a two-dimensional subspace if and only if
$\alpha^{i_1} + \alpha^{i_2} + \alpha^{i_3}=0$. Also,
$\{ {\bf 0}, \alpha^{i_1} , \alpha^{i_2}, \alpha^{i_3} \}$
is a two-dimensional subspace if and only if
$\{ {\bf 0}, \alpha^{i_1+j} , \alpha^{i_2+j}, \alpha^{i_3+j} \}$
is a two-dimensional subspace for every integer $j$. Therefore,
$C(i_2-i_1)$, $C(i_1-i_2)$, $C(i_3-i_1)$, $C(i_1-i_3)$,
$C(i_3-i_2)$, $C(i_2-i_3)$, always appear together in
a coset difference set. It follows that we can partition the
$\frac{2^n-2}{n}$ cyclotomic cosets of size $n$,
into $\frac{2^n-2}{6n}$ groups of cosets and instead of 42 integers in a coset
difference set we should consider only 7 such integers.
We form a graph $G(V,E)$ as follows. The set $V$ of vertices
for $G$ are represented by 7-subsets of the set of $\frac{2^n-2}{6n}$ elements
which represents the $\frac{2^n-2}{6n}$ groups of cosets. Such a 7-subset
$v$ represents a vertex if and only if there exists a 3-dimensional
subspace $X$ of $\F_2^n$ whose coset difference set $C(\Delta(X))$
is represented by $v$. For two vertices
$v_1,v_2 \in V$ represented by 7-subsets, there is an edge
$\{ v_1 , v_2 \} \in E$ if and only if $v_1 \cap v_2 = \varnothing$,
i.e. the related 3-dimensional subspaces are disjoint coset complete.
A clique with $m$ vertices in $G$ represents $m$ pairwise disjoint
coset complete 3-dimensional subspaces. A clique with
$\frac{2^n-2}{42n}$ vertices represents a Steiner structure
$\dS_2 [2,3,n]$.

A program which generates this graph for $n=13$ was written.
For $n=13$ we have $\frac{2^{13}-2}{13} =630$ cyclotomic
cosets of size 13, and 105 groups of cosets.
A clique of size $\frac{2^n-2}{42n} =15$ in this graph represents a Steiner
structure $\dS_2[2,3,13]$. Unfortunately, it is not feasible
to check even a small fraction of the subsets with 15 vertices
of this graph. Cliques of size 14 in the graph were found in a rate
of more than 10 such cliques per minute. Such a clique represents
a code with 1490762 3-dimensional subspaces of $\F_2^7$ and minimum
distance 4 (compared to the largest code of size 1221296 known
before~\cite{EtSi11}, and the largest code of size 1154931 in
which the cyclic shifts form the automorphism group~\cite{KoKu}).

\section{Conclusion and future work}
\label{sec:conclusion}

This work is a preliminary attempt to solve two of the
most intriguing and difficult problems connecting
design theory and coding theory in the Grassmannian (and the
related projective geometry),
namely the existence of nontrivial Steiner structure
and construction of parallelism of lines in the projective
space.
A parallelism of lines in PG(5,3) was given and an idea
for a general construction of a parallelism in PG($n,p$)
was suggested.
A Steiner structure $\dS_2[2,3,13]$ was not constructed,
but a related code which is the largest known
one for the related parameters, was found and the
foundations to find such structure were suggested.
We would like to see the first nontrivial
Steiner structure and a new infinite family of parallelisms
with new parameters based on the foundation given in this work.

%%%%%%%%%%%%%%%%%%%%%%%%%%%%%%%%%%%%%%%%%%%%%%%%%%%%%%%%%%%%%%%%%%%%%%
%%%%%%%%%%%%%%%%%%%%%%%%%%%%%%%%%%%%%%%%%%%%%%%%%%%%%%%%%%%%%%%%%%%%%%
%%%%%%%%%%%%%%%%%%%%%%%%%%%%%%%%%%%%%%%%%%%%%%%%%%%%%%%%%%%%%%%%%%%%%%
%%%%%%%%%%%%%%%%%%%%%%%%%%%%%%%%%%%%%%%%%%%%%%%%%%%%%%%%%%%%%%%%%%%%%%

%\bibliography{allbib}

\end{document}